\theoremstyle{plain}
\newtheorem{theorem}{Theorem}
\newtheorem{corollary}{Corollary}
\theoremstyle{example}
\newtheorem{example}{Example}
\theoremstyle{definition}
\theoremstyle{remark}
\numberwithin{equation}{section}
\newdimen\plusheight
\def\+{\;\lower\plusheight\hbox{$+$}\;}
\newdimen\minusheight
\def\-{\;\lower\minusheight\hbox{$-$}\;}
\newdimen\cdotsheight
\def\cds{\lower\cdotsheight\hbox{$\cdots$}}
\begin{document}
\title[  continued fractions of the form
 $K_{n=1}^{\infty} a_{n}/1$ .]
 {A convergence theorem for continued fractions of the form
 $K_{n=1}^{\infty} a_{n}/1$. }

\author{J. Mc Laughlin}
\address{Department of Mathematics\\
 Trinity College\\
300 Summit Street, Hartford, CT 06106-3100}
\email{james.mclaughlin@trincoll.edu}
\author{ Nancy J. Wyshinski}
\address{Department of Mathematics\\
       Trinity College\\
        300 Summit Street, Hartford, CT 06106-3100}
\email{nancy.wyshinski@trincoll.edu}
\keywords{Continued
Fractions} \subjclass{Primary:11A55}
\date{April 14, 2003}

\begin{abstract}

In this paper we present a convergence theorem for continued
fractions of the form $K_{n=1}^{\infty}a_{n}/1$. By deriving
conditions on the $a_{n}$ which ensure that the odd and even parts
of $K_{n=1}^{\infty}a_{n}/1$ converge, these same conditions also
ensure that they converge to the same limit. Examples will be
given.
\end{abstract}

\maketitle

{\it  This paper is dedicated to Professor Olav Njastad on the
occasion of his 70th birthday.}

\section{Introduction}

In this paper we derive a convergence theorem for continued
fractions of the form $K_{n=1}^{\infty}a_{n}/1$. This is achieved
by using  Worpitzky's theorem to derive conditions on the $a_{n}$
which ensure that the odd and even parts of
$K_{n=1}^{\infty}a_{n}/1$ converge and, furthermore,
  converge to the same limit. We will assume
$a_{n} \not =0$ for any $n$, since otherwise the continued
fraction is finite and converges trivially in $\hat{\mathbb{C}}$.

\medskip

We begin by summarizing definitions and basic properties for
continued fractions that are needed.
We write $\displaystyle{A_{N}/B_{N}}$ (the $N$-th
\emph{approximant})
 for the  finite continued
fraction $b_0+K_{n = 1}^{N}a_{n}/b_{n}$ written as a rational function of the variables
$a_{1},\dots,a_{N}$,$b_1$, $\dots$, $b_N$. It is elementary that the $A_{N}$
(the $N$-th (canonical) \emph{numerator})
 and $B_{N}$ (the $N$-th (canonical) \emph{denominator})
 satisfy the following
recurrence relations:
{\allowdisplaybreaks
\begin{align}\label{E:recur}
A_{N}&=b_{N}A_{N-1}+a_{N}A_{N-2},A_{-1}=1, A_0=0,\\
B_{N}&=b_{N}B_{N-1}+a_{N}B_{N-2},B_{-1}=0, B_0=1 \notag .
\end{align}
}
It can also be easily shown that
{\allowdisplaybreaks
\begin{equation}\label{detform}
A_{N}B_{N-1}-A_{N-1}B_{N}=(-1)^{N-1}\prod_{i=1}^{N}a_{i}.
\end{equation}
}

\medskip

We call $d_{0}+K_{n=1}^{\infty}c_{n}/d_{n}$  a \emph{canonical
contraction} of
 $b_{0}+K_{n=1}^{\infty}a_{n}/b_{n}$ if
{\allowdisplaybreaks
\begin{align*}
&C_{k}=A_{n_{k}},& &D_{k}=B_{n_{k}}& &\text{ for }
k=0,1,2,3,\ldots \, ,\phantom{asdasd}&
\end{align*}
}
where $C_{n}$, $D_{n}$, $A_{n}$ and $B_{n}$ are canonical
numerators and denominators of $d_{0}+K_{n=1}^{\infty}c_{n}/d_{n}$
and $b_{0}+K_{n=1}^{\infty}a_{n}/b_{n}$ respectively.

From \cite{LW92} (page 83) we have the following theorem:
\begin{theorem}\label{T:t1}
The canonical contraction of $b_{0}+K_{n=1}^{\infty}a_{n}/b_{n}$
with
{\allowdisplaybreaks
\begin{align*}
&C_{k}=A_{2k}& &D_{k}=B_{2k}& &\text{ for } k=0,1,2,3,\ldots \, ,&
\end{align*}
}
exists if and only if $b_{2k} \not = 0 for k=0,1,2,3,\ldots$, and in
this case is given by
{\allowdisplaybreaks
\begin{equation}\label{E:evcf}
b_{0} + \frac{b_{2}a_{1}}{b_{2}b_{1}+a_{2}} \-
\frac{a_{2}a_{3}b_{4}/b_{2}}{a_{4}+b_{3}b_{4}+a_{3}b_{4}/b_{2}} \-
\frac{a_{4}a_{5}b_{6}/b_{4}}{a_{6}+b_{5}b_{6}+a_{5}b_{6}/b_{4}} \-
\cds .
\end{equation}
}
\end{theorem}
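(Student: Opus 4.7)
The plan is to construct the claimed contraction directly by solving for $c_k$ and $d_k$ from the recurrence (\ref{E:recur}) applied to $C_k = A_{2k}$, $D_k = B_{2k}$, and then to read off the necessity of the condition $b_{2k}\neq 0$ from the divisions that appear in the formulas.

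First I would dispose of the low-index cases. Since $D_0=B_0=1$ matches automatically and $C_0=A_0=b_0$, we must take $d_0=b_0$. For $k=1$, the target recurrence gives $D_1 = d_1 D_0 = d_1$ and $C_1 = d_1 C_0 + c_1\cdot 1$, so $d_1=B_2=b_1b_2+a_2$ and, after substituting $A_2 = b_0 B_2 + b_2 a_1$, one reads off $c_1=b_2 a_1$. These agree with the first partial quotient in (\ref{E:evcf}) and already require $b_2\neq 0$ (to keep $c_1\neq 0$ and the contraction proper).

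For the inductive step $k\geq 2$, I would apply the original recurrence (\ref{E:recur}) twice to $A_{2k}$ and once more to the intermediate $A_{2k-1}$, yielding
\[
A_{2k} = b_{2k}\bigl(b_{2k-1} A_{2k-2} + a_{2k-1} A_{2k-3}\bigr) + a_{2k} A_{2k-2}.
\]
The nuisance term $A_{2k-3}$ must be eliminated: I would rewrite $A_{2k-2} = b_{2k-2} A_{2k-3} + a_{2k-2} A_{2k-4}$ and solve for $A_{2k-3}$, which is the point at which $b_{2k-2}\neq 0$ is forced. Substituting back groups $A_{2k}$ as a linear combination of $A_{2k-2}$ and $A_{2k-4}$ with coefficients
\[
d_k = a_{2k} + b_{2k-1}b_{2k} + \frac{a_{2k-1}b_{2k}}{b_{2k-2}}, \qquad c_k = -\frac{a_{2k-2}a_{2k-1}b_{2k}}{b_{2k-2}},
\]
matching the partial quotients of (\ref{E:evcf}) (the overall minus signs there are absorbed by writing each partial quotient with a $-$ in front). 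Carrying out the identical manipulation for $B_{2k}$ produces the same coefficients $c_k,d_k$, which is the crux: both sequences satisfy one and the same three-term recurrence, so $C_k=A_{2k}$, $D_k=B_{2k}$ are indeed the canonical numerators and denominators of $d_0+K_{n=1}^{\infty} c_n/d_n$.

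For the converse, I would observe that if $b_{2k_0}=0$ for some $k_0\geq 1$, then the formulas above either divide by zero (for $k=k_0+1$) or force $c_{k_0}=0$ (making the contraction degenerate), so the canonical contraction fails to exist in the required sense. The main obstacle is purely bookkeeping: ensuring that the elimination of the odd-indexed $A_{2k-3}$ and $B_{2k-3}$ produces \emph{the same} coefficients for both the numerator and denominator sequences, and that the resulting expressions match (\ref{E:evcf}) after the sign conventions of the $\-$ operator are accounted for. No genuine analytic difficulty is expected; the proof is an algebraic identity check leveraging the linearity of the recurrence (\ref{E:recur}).
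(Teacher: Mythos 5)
You should first note a quirk of the comparison: the paper does not prove this theorem at all --- it is quoted from Lorentzen and Waadeland \cite{LW92} (page 83) --- so the benchmark is the standard textbook argument, which is exactly the route you take. Your forward direction is sound and the algebra checks: eliminating $A_{2k-3}$ via $A_{2k-2}=b_{2k-2}A_{2k-3}+a_{2k-2}A_{2k-4}$ gives $d_{k}=a_{2k}+b_{2k-1}b_{2k}+a_{2k-1}b_{2k}/b_{2k-2}$ and $c_{k}=-a_{2k-2}a_{2k-1}b_{2k}/b_{2k-2}$, the same manipulation applies verbatim to the $B$'s by linearity of \eqref{E:recur}, and the signs match \eqref{E:evcf} once the lowered minus is read as a negative partial numerator. (Incidentally, your step $k=1$ correctly uses $A_{0}=b_{0}$; the initial condition $A_{0}=0$ in \eqref{E:recur} is a typo in the paper, as is the inclusion of $k=0$ in the condition $b_{2k}\neq 0$, since $b_{0}$ never appears in a denominator and $d_{0}=b_{0}=0$ is harmless.)

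The one genuine soft spot is your necessity direction. Your dichotomy ``divide by zero at $k=k_{0}+1$ or $c_{k_{0}}=0$'' tacitly assumes your formulas are the \emph{only} candidates for $(c_{k},d_{k})$, but they are the unique solution of the $2\times 2$ linear system $A_{2k}=d_{k}A_{2k-2}+c_{k}A_{2k-4}$, $B_{2k}=d_{k}B_{2k-2}+c_{k}B_{2k-4}$ only when its determinant $A_{2k-2}B_{2k-4}-A_{2k-4}B_{2k-2}=b_{2k-2}\prod_{i=1}^{2k-3}a_{i}$ (a two-step analogue of \eqref{detform}) is nonzero. If $b_{2k-2}=0$ \emph{and} $b_{2k}=0$, that system is degenerate but consistent --- it collapses to $(a_{2k-2}d_{k}+c_{k})A_{2k-4}=A_{2k}$ and the companion equation in the $B$'s --- and admits a one-parameter family of solutions, some with $c_{k}\neq 0$; so ``division by zero'' alone does not rule out existence when two consecutive even-indexed $b$'s vanish. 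The repair is to anchor at the \emph{minimal} $k_{0}\geq 1$ with $b_{2k_{0}}=0$: for $k_{0}\geq 2$ the determinant at step $k_{0}$ equals $b_{2k_{0}-2}\prod_{i=1}^{2k_{0}-3}a_{i}\neq 0$ by minimality and the standing assumption $a_{n}\neq 0$, so $(c_{k_{0}},d_{k_{0}})$ is uniquely determined and $c_{k_{0}}=-a_{2k_{0}-2}a_{2k_{0}-1}b_{2k_{0}}/b_{2k_{0}-2}=0$; for $k_{0}=1$ the system is triangular (since $D_{-1}=0$ forces $d_{1}=B_{2}$) and $c_{1}=A_{2}-b_{0}B_{2}=a_{1}b_{2}=0$. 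A vanishing partial numerator is inadmissible in a continued fraction, so the contraction fails. With that adjustment your proof is complete and coincides in substance with the argument in \cite{LW92}.
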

The continued fraction \eqref{E:evcf} is called the \emph{even}
part of $b_{0}+K_{n=1}^{\infty}a_{n}/b_{n}$.

From \cite{LW92} (page 85) we also have:
\begin{theorem}\label{odcf}
The canonical contraction of $b_{0}+K_{n=1}^{\infty}a_{n}/b_{n}$
with $C_{0}=A_{1}/B_{1}$, $D_{0}=1$ and
{\allowdisplaybreaks
\begin{align*}
&C_{k}=A_{2k+1}& &D_{k}=B_{2k+1}& &\text{ for } k=1,2,3,\ldots \,
,&
\end{align*}
}
exists if and only if $b_{2k+1} \not = 0$ for $k=0,1,2,3,\ldots$,
and in this case is given by
{\allowdisplaybreaks
\begin{multline}\label{E:odcf}
\frac{b_{0}b_{1}+a_{1}}{b_{1}} -
\frac{a_{1}a_{2}b_{3}/b_{1}}{b_{1}(a_{3}+b_{2}b_{3})+a_{2}b_{3}}
\-
\frac{a_{3}a_{4}b_{5}b_{1}/b_{3}}{a_{5}+b_{4}b_{5}+a_{4}b_{5}/b_{3}}\\
\- \frac{a_{5}a_{6}b_{7}/b_{5}}{a_{7}+b_{6}b_{7}+a_{6}b_{7}/b_{5}}
\- \frac{a_{7}a_{8}b_{9}/b_{7}}{a_{9}+b_{8}b_{9}+a_{8}b_{9}/b_{7}}
\- \cds .
\end{multline}
}
\end{theorem}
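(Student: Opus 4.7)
My plan is to mirror the derivation of Theorem \ref{T:t1}: I would use the three-term recurrence \eqref{E:recur} to eliminate the even-indexed $A_j,B_j$ from the odd-indexed ones, and then read off the resulting three-term recurrence as the canonical numerator/denominator recurrence of a continued fraction whose zeroth approximant is $A_1/B_1$.

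First I would dispatch the initial data. By direct substitution into \eqref{E:recur}, $A_1/B_1=(b_0b_1+a_1)/b_1$, giving the $d_0$ of \eqref{E:odcf}. The contracted recurrence at $k=1$ reads $C_1=d_1(A_1/B_1)+c_1$ and $D_1=d_1$, with target values $C_1=A_3$ and $D_1=B_3$. Solving, $d_1=B_3=b_1(a_3+b_2b_3)+a_2b_3$ and $c_1=(A_3B_1-A_1B_3)/B_1$; expanding $A_3,B_3$ via \eqref{E:recur} reduces the numerator to $b_3(A_2B_1-A_1B_2)$, which by \eqref{detform} equals $-a_1a_2b_3$, giving $c_1=-a_1a_2b_3/b_1$, as required.

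For the general case $k\geq 2$ I would solve \eqref{E:recur} for $A_{2k-2}=(A_{2k-1}-a_{2k-1}A_{2k-3})/b_{2k-1}$ (this is where $b_{2k-1}\neq 0$ is used), substitute into $A_{2k}=b_{2k}A_{2k-1}+a_{2k}A_{2k-2}$, and then into $A_{2k+1}=b_{2k+1}A_{2k}+a_{2k+1}A_{2k-1}$ to obtain
\begin{equation*}
A_{2k+1}=\Bigl(a_{2k+1}+b_{2k}b_{2k+1}+\frac{a_{2k}b_{2k+1}}{b_{2k-1}}\Bigr)A_{2k-1}-\frac{a_{2k-1}a_{2k}b_{2k+1}}{b_{2k-1}}A_{2k-3},
\end{equation*}
with the identical identity for $B$. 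The bracketed coefficient is exactly the partial denominator shown in \eqref{E:odcf}, and for $k\geq 3$ the coefficient of $A_{2k-3}$ is the partial numerator directly. At $k=2$ the contracted recurrence is $C_2=d_2A_3+c_2(A_1/B_1)$ rather than $d_2A_3+c_2A_1$, so the required $c_2$ equals the coefficient of $A_1$ above multiplied by $B_1=b_1$; this produces the extra $b_1$ factor visible in the second partial numerator of \eqref{E:odcf}.

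Existence of the contraction is then equivalent to every one of these manipulations being legitimate; the only divisions involved are by $b_1$ (in $d_0$) and by $b_{2k-1}$ for $k\geq 2$, which gives the stated condition $b_{2k+1}\neq 0$ for all $k\geq 0$, with the converse following because a vanishing $b_{2j+1}$ makes the corresponding partial denominator or substitution undefined. The hard part will be the bookkeeping at $k=1,2$: the nonstandard initial datum $C_0=A_1/B_1$, a ratio rather than a polynomial, forces a separate treatment of $k=1$ via \eqref{detform} and introduces the unexpected factor of $B_1$ in the numerator $c_2$ that breaks the otherwise uniform pattern of \eqref{E:odcf}.
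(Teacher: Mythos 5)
The paper itself contains no proof of Theorem \ref{odcf}: it is quoted verbatim from \cite{LW92} (page 85), so your proposal can only be measured against the standard derivation, and its constructive half does match that derivation. Your bookkeeping is correct: $d_0=A_1/B_1=(b_0b_1+a_1)/b_1$; at $k=1$ the pair $(d_1,c_1)$ is forced by $D_1=d_1$, and $A_3B_1-A_1B_3=b_3(A_2B_1-A_1B_2)=-a_1a_2b_3$ by \eqref{detform}, giving $c_1=-a_1a_2b_3/b_1$; the elimination identity for $A_{2k+1}$ in terms of $A_{2k-1}$, $A_{2k-3}$ is right, and so is your explanation of the stray factor $b_1$ in the second partial numerator (note it works simultaneously for numerators and denominators precisely because $(C_0,D_0)$ is the pair $(A_1,B_1)$ scaled by $1/B_1$). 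One omission in the forward direction: you exhibit $(d_k,c_k)$ solving the two interpolation equations but never remark that they are the \emph{only} solutions. That costs nothing for existence, but it is exactly what the converse needs, and it follows from $A_{2k-1}B_{2k-3}-A_{2k-3}B_{2k-1}=-b_{2k-1}\prod_{i=1}^{2k-2}a_i\neq 0$, where the standing assumption $a_n\neq 0$ from the paper's introduction is essential.

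The genuine gap is your converse. Saying that ``a vanishing $b_{2j+1}$ makes the corresponding partial denominator or substitution undefined'' only shows that \emph{your} construction breaks down, not that no continued fraction with the prescribed $C_k$, $D_k$ exists; worse, the claim is factually off: at the first index $j\geq 1$ with $b_{2j+1}=0$, nothing in the formulas is undefined, since the only division occurring in $(d_j,c_j)$ is by $b_{2j-1}\neq 0$. What actually fails is different: by uniqueness (Cramer's rule with the determinant above), the forced partial numerator is $c_j=-a_{2j-1}a_{2j}b_{2j+1}/b_{2j-1}=0$, and in the convention of \cite{LW92}, which this theorem presupposes, a continued fraction must have all partial numerators nonzero. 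This distinction is not pedantry: with $c_j=0$ the remaining requirements $C_j=d_jC_{j-1}$, $D_j=d_jD_{j-1}$ are \emph{consistent}, because $A_{2j+1}B_{2j-1}-A_{2j-1}B_{2j+1}=-b_{2j+1}\prod_{i=1}^{2j}a_i=0$ when $b_{2j+1}=0$; so if zero partial numerators were tolerated, a ``contraction'' would formally exist and the only-if direction would be false. To close the argument you need (i) uniqueness of $(d_k,c_k)$ at every step via the nonvanishing determinant, using $a_n\neq 0$, and (ii) the observation that $b_{2j+1}=0$ forces $c_j=0$, which is excluded by the definition of a continued fraction --- not an undefined division (the case $j=0$, where $C_0=A_1/B_1$ is genuinely undefined, being the lone exception).
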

The continued fraction \eqref{E:odcf} is called the \emph{odd}
part of $b_{0}+K_{n=1}^{\infty}a_{n}/b_{n}$.

\medskip
We also make repeated use of
\medskip

\textbf{Worpitzky's Theorem}\emph{ $($\cite{LW92}, pp. 35--36$)$
For all $n \geq 1$, let
\[
|a_{n}|\leq \frac{1}{4}.
\]
 Then $K_{n=1}^{\infty}a_{n}/1$
converges.  All approximants $f_{n}$ of the continued fraction lie in the
disc $|w|<1/2$ and the value of the continued fraction $f$ is in the
disk $|w|\leq1/2$.}

\section{ Background}

\medskip

Of course the idea of using the convergence of the odd and even parts of a
continued fraction $K_{n = 1}^{\infty}\displaystyle{a_{n}/1}$  to show that the
continued fraction itself converges is not new. The following system of inequalities,
called the \emph{fundamental inequalities} by Wall \cite{W48}, ensures the
convergence of the odd and even parts of $K_{n = 1}^{\infty}\displaystyle{a_{n}/1}$.
{\allowdisplaybreaks
\begin{align}\label{fueqs}
r_{1}|1+a_{1}|&\geq |a_{1}|,  \\
 r_{2}|1+a_{1}+a_{2}|&\geq |a_{2}|,\notag  \\
r_{n}|1+a_{n-1}+a_{n}|&\geq r_{n}r_{n-2} |a_{n-1}| +|a_{n}|, \,\,\,\,\,\,\,
n=3,4,5,\dots , \notag
\end{align}
}
where $r_{n} \geq 0$. These inequalities are obtained by applying
the \'Sleszy\'nski-Pringsheim criterion stated below to continued
fractions equivalent to the even and odd parts of $K_{n =
1}^{\infty}\displaystyle{a_{n}/1}$. This approach has been the
starting point for several important lines of research (see
\cite{W48}, Chapter III and \cite{JT80}, section 4.4.5, for more
details of these).

\medskip

 \textbf{\'Sleszy\'nski-Pringsheim Theorem} \emph{$($
\cite{P99}, see also \cite{JT80}, page 92$)$ The continued
fraction $K(a_{n}/b_{n})$ converges to a finite value if
{\allowdisplaybreaks
\begin{align}
&|b_{n}|\geq |a_{n}|+1,& &n=1,2,3, \ldots .
\end{align}
}
If $f_{n}$ denotes the $n$-th approximant, then
{\allowdisplaybreaks
\begin{align}
&|f_{n}|<1,& &n=1,2,3, \ldots .
\end{align}
}
 }

Rather than working with very general implicit inequalities, such as those
at \eqref{fueqs}, we have looked for simple explicit inequalities. This approach
allows us to state some quite general, simple criteria for the convergence
of certain classes of continued fractions of the form
$K_{n = 1}^{\infty}\displaystyle{a_{n}/1}$. We have the following theorem.

{\allowdisplaybreaks }

\section{Main Theorem and Examples}
{\allowdisplaybreaks
\begin{theorem}\label{tgen}
Let $\{c_{n}\}_{n = 1}^{\infty}$ and $\{\beta_{n}\}_{n =
1}^{\infty}$  be sequences of  numbers, with $c_{n}>0$ and $0 <
\beta_{n}<1$ for $n \geq 1$. If the terms in the sequence
$\{a_{n}\}_{n = 1}^{\infty}$ satisfy, for $n \geq 1$, either
{\allowdisplaybreaks
\begin{align}\label{cond1}
|a_{2n}| &\geq \frac{c_{n}+1}{1-\beta_{n}},\\
|a_{2n+1}|&\leq \min \left \{ c_{n}, \,c_{n+1}, \,
 \beta_{n} \beta_{n+1} \frac{c_{n+1}+1}{4(1-\beta_{n+1})}, \,
\beta_{n} \beta_{n+1} \frac{c_{n}+1}{4(1-\beta_{n})} \right \},
\notag
\end{align}
}
or
{\allowdisplaybreaks
\begin{align}\label{cond2}
|a_{2n}| &\geq \max \left \{\frac{c_{n}+1}{1-\beta_{n}},  \frac{c_{n+1}+1}{1-\beta_{n+1}} \right \},\\
|a_{2n+1}|&\leq \min \left \{ c_{n}, \,c_{n+1}, \,
 \beta_{n} \beta_{n+1} \frac{c_{n+1}+1}{4(1-\beta_{n+1})} \right \}, \notag
\end{align}
}
then the continued fraction $K_{n =
1}^{\infty}\displaystyle{a_{n}/1}$ converges to a finite value.
\end{theorem}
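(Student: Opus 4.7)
My plan is to apply Theorems~\ref{T:t1} and~\ref{odcf} to obtain the even and odd parts of $K_{n=1}^{\infty} a_n/1$ explicitly, transform each via an equivalence into a form to which Worpitzky's theorem applies, and then show that both parts converge to the same limit.

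Setting $b_0 = 0$ and all $b_n = 1$ in Theorems~\ref{T:t1} and~\ref{odcf}, the even and odd parts of $K_{n=1}^{\infty} a_n/1$ become, respectively,
\[
\frac{a_1}{1+a_2} - \frac{a_2 a_3}{1+a_3+a_4} - \frac{a_4 a_5}{1+a_5+a_6} - \cdots
\quad\text{and}\quad
a_1 - \frac{a_1 a_2}{1+a_2+a_3} - \frac{a_3 a_4}{1+a_4+a_5} - \cdots.
\]
I would then apply the equivalence transformation with $r_k = 1/(1+a_{2k-1}+a_{2k})$ to the even part (and an analogous one to the odd part) to bring each into the form $K c_k'/1$, and verify $|c_k'| \leq 1/4$ for $k \geq 2$ from the hypotheses. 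The essential estimate is the reverse-triangle bound
\[
|1 + a_{2n-1} + a_{2n}| \;\geq\; |a_{2n}| - |a_{2n-1}| - 1 \;\geq\; \beta_n\,|a_{2n}|,
\]
obtained from $|a_{2n}| \geq (c_n+1)/(1-\beta_n)$ together with $|a_{2n-1}| \leq c_n$. Combined with the ratio bound $|a_{2n+1}|/|a_{2n}| \leq \beta_n\beta_{n+1}/4$, which follows by dividing the upper bound on $|a_{2n+1}|$ by the lower bound on $|a_{2n}|$, a short calculation yields $|c_k'| \leq 1/4$, and Worpitzky's theorem delivers convergence of each transformed tail; the first few terms are harmless since the associated denominators are bounded away from zero.

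For the common limit, I would use the determinant identity~\eqref{detform} to write
\[
\frac{A_{2k+1}}{B_{2k+1}} - \frac{A_{2k}}{B_{2k}} \;=\; \frac{\prod_{i=1}^{2k+1} a_i}{B_{2k} B_{2k+1}},
\]
and show that the right-hand side tends to zero. The strategy is to establish lower bounds $|B_{2k}|, |B_{2k+1}| \geq C \prod_{j \leq k} |a_{2j}|$ for some positive constant $C$, after which the ratio bound $|a_{2j+1}/a_{2j}| \leq \beta_j\beta_{j+1}/4 \leq 1/4$ forces geometric decay of the right-hand side. The required lower bound should come by induction from the three-term recurrence $B_{2k} = (1+a_{2k-1}+a_{2k})B_{2k-2} - a_{2k-2}a_{2k-1}B_{2k-4}$ inherited from the even part, via the normalized ratio $y_k = B_{2k}/(a_{2k}B_{2k-2})$, which satisfies $y_k = u_k - v_k/y_{k-1}$ with $|u_k - 1| \leq 1 - \beta_k$ and $|v_k| \leq \beta_{k-1}\beta_k/4$. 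The main obstacle I foresee is that a naive induction only yields $|y_k| \geq \beta_k/2$, whose product over $k$ need not stay bounded below; producing a uniform lower bound will likely require exploiting the symmetric form of the upper bound on $|a_{2n+1}|$ in the hypothesis (which involves both $(c_n,\beta_n)$ and $(c_{n+1},\beta_{n+1})$) to strengthen the induction and close it at a $k$-independent constant.
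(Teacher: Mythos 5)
Your handling of the even and odd parts coincides with the paper's proof (same contractions, same equivalence transformation, same estimates $|1+a_{2n-1}+a_{2n}|\geq\beta_n|a_{2n}|$ and $|a_{2n+1}|/|a_{2n}|,\;|a_{2n+1}|/|a_{2n+2}|\leq\beta_n\beta_{n+1}/4$, both of which do follow from \eqref{cond1} or \eqref{cond2}), so that half is sound. The genuine gap is exactly the one you flag at the end, and it is fatal to the argument as you have designed it: no $k$-independent constant $C$ with $|B_{2k}|\geq C\prod_{j\leq k}|a_{2j}|$ can come out of the recursion $y_k=u_k-v_k/y_{k-1}$, because the hypotheses pin the moduli bounds precisely at the Worpitzky boundary, $|u_k|\geq\beta_k$ and $|v_k|\leq\beta_{k-1}\beta_k/4$. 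Any induction using only these bounds must survive the worst-case recursion $y_k=\beta-(\beta^2/4)/y_{k-1}$ (constant $\beta_n\equiv\beta$), whose fixed point $\beta/2$ is a double root; the orbit decays to it like $\tfrac{\beta}{2}\bigl(1+\tfrac{1}{k}\bigr)$, so $\prod_{j\leq k}y_j\asymp k(\beta/2)^k\to 0$. Thus "closing the induction at a $k$-independent constant" is not a missing detail but an impossible target, and the symmetric form of the hypotheses cannot rescue it, since that symmetry is exactly what produces these boundary constants. With your naive bound $|y_k|\geq\beta_k/2$ the final estimate degenerates: the numerator bound $\prod_j|a_{2j+1}|\leq|a_1|\prod_j(\beta_j\beta_{j+1}/4)|a_{2j}|$ cancels against $\bigl(\prod_j(\beta_j/2)|a_{2j}|\bigr)^2$ and leaves $|a_1|\beta_{k+1}/\beta_1=O(1)$, not $o(1)$.

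There are two ways out. Your route can in fact be completed, but only by abandoning the uniform constant: run the sharper induction $|y_k|\geq\beta_k(1-\epsilon_k)$ with $\epsilon_k=1/(4(1-\epsilon_{k-1}))$, $\epsilon_1=0$; then $\epsilon_k\uparrow 1/2$ while $\prod_{j\leq k}(1-\epsilon_j)\asymp k\,2^{-k}$, and since the numerator carries exactly the matching factors $\beta_j\beta_{j+1}/4$, the quotient telescopes to $O\bigl(\beta_{k+1}/(\beta_1 k^2)\bigr)\to 0$; the polynomial gain $k^2$ over the naive bound is what closes the argument. The paper avoids all of this with a neater device. From \eqref{E:recur},
\[
\frac{B_{2n+1}}{B_{2n}} \;=\; 1+\frac{a_{2n+1}}{1}\+\frac{a_{2n}}{1}\+\cds\+\frac{a_{2}}{1},
\]
a finite continued fraction in the \emph{reversed} sequence; contracting it by Theorem \ref{T:t1} and applying the identical Worpitzky estimates (now requiring the backward ratio bound $|a_{2m+1}|/|a_{2m}|\leq\beta_m\beta_{m+1}/4$, which is exactly why the fourth entry of the min in \eqref{cond1}, respectively the max in \eqref{cond2}, appears in the hypotheses, as you correctly sensed) yields $|B_{2n+1}/B_{2n}|\leq 1+2|a_{2n+1}|/(|a_{2n}|-1)\leq 3$. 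Since convergence of the odd part already forces $\prod_{i\leq 2n}|a_i|/|B_{2n+1}B_{2n-1}|\to 0$ (it equals the difference of consecutive odd approximants), identity \eqref{difeq} then annihilates the even--odd difference with no lower bound on the $|B_m|$ at all.
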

}

\begin{proof}
By Theorem \ref{T:t1} the even part of  $K_{n = 1}^{\infty}\displaystyle{a_{n}/1}$
is
 {\allowdisplaybreaks
\begin{align}\label{evprt}
&\phantom{a}\frac{a_{1}}{1+a_{2}}
\-
\frac{a_{2}a_{3}}{a_{4}+1+a_{3}}
\-
\frac{a_{4}a_{5}}{a_{6}+1+a_{5}}
\-
\cds
\-
 \frac{a_{2n}a_{2n+1}}{a_{2n+2}+1+a_{2n+1}}
\-
\cds \\
&\phantom{a} \notag\\
&= \frac{a_{1}/(1+a_{2})}{1}
\-
\frac{a_{2}a_{3}/((1+a_{2})(a_{4}+1+a_{3}))}{1} \notag \\
&\phantom{asdasdas}\-
\frac{a_{4}a_{5}/((a_{4}+1+a_{3})(a_{6}+1+a_{5}))}{1}
\-
\cds \notag \\
&\phantom{asdassfdfdsdas}\-
 \frac{a_{2n}a_{2n+1}/((a_{2n}+1+a_{2n-1})(a_{2n+2}+1+a_{2n+1}))}{1}
\-
\cds . \notag
\end{align}
}
The second continued fraction arises from the first after applying a sequence of
similarity transformations.
We now show that a tail of this continued fraction satisfies the conditions of Worpitzky's Theorem.
From the conditions at \eqref{cond1} or \eqref{cond2} above, it follows that
$|a_{2i}|>c_{i}+1\geq |a_{2i-1}|+1$ and so, using the conditions at
\eqref{cond1} or \eqref{cond2}
 several times, we have that
{\allowdisplaybreaks
\begin{align*}
&\left | \frac{a_{2n}a_{2n+1}}{(a_{2n}+1+a_{2n-1})(a_{2n+2}+1+a_{2n+1})} \right |\\
& \leq
\frac{|a_{2n}| |a_{2n+1}|}{(|a_{2n}|-(c_{n}+1))(|a_{2n+2}|-(c_{n+1}+1))}\\
& \leq
\frac{|a_{2n}| |a_{2n+1}|}{(|a_{2n}|-(1-\beta_{n})|a_{2n}|)(|a_{2n+2}|-(1-\beta_{n+1})|a_{2n+2}|)}\\
&=\frac{|a_{2n+1}| }{\beta_{n}\beta_{n+1}|a_{2n+2}|}
\leq \frac{|a_{2n+1}| }{\beta _{n}\beta_{n+1}(c_{n+1}+1)/(1-\beta_{n+1}) }\leq \frac{1}{4}.
\end{align*}
}
Thus,  by Worpitzky's Theorem,
the even part of $K_{n = 1}^{\infty}\displaystyle{a_{n}/1}$  equals
{\allowdisplaybreaks
\[
\frac{a_{1}/(1+a_{2})}{1}
\-
\frac{a_{2}a_{3}/((1+a_{2})(a_{4}+1+a_{3}))}{1+ \alpha},
\]
}
for some $\alpha$ with  $|\alpha|\leq1/2$.
Likewise,
{\allowdisplaybreaks
\begin{align*}
\left | \frac{a_{2}a_{3}}{(1+a_{2})(a_{4}+1+a_{3})}\right |
&\leq \frac{|a_{3}|}
{(1-1/|a_{2}|)\beta_{2} |a_{4}|} \\
&\leq \frac{\beta_{1}\beta_{2} (c_{2}+1)/(4(1-\beta_{2}))}
{\left (1-\displaystyle{\frac{1-\beta_{1}}
{c_{1}+1}} \right ) \beta_{2}\displaystyle{ \frac{c_{2}+1}{1-\beta_{2}}}} \\
&= \frac{\beta_{1}(c_{1}+1)}{4(c_{1}+\beta_{1})}
<\frac{1}{2}.
\end{align*}
}
Hence the even part of $K_{n = 1}^{\infty}\displaystyle{a_{n}/1}$ converges
to a finite value.

From Theorem \ref{odcf}, the odd part of $K_{n = 1}^{\infty}\displaystyle{a_{n}/1}$ is
{\allowdisplaybreaks
\begin{align}\label{odprt}
&\frac{a_{1}}{1}
-
\frac{a_{1}a_{2}}{a_{3}+1+a_{2}}
\-
\frac{a_{3}a_{4}}{a_{5}+1+a_{4}}
\-
\frac{a_{5}a_{6}}{a_{7}+1+a_{6}}
\-
\cds
\frac{a_{2n-1}a_{2n}}{a_{2n+1}+1+a_{2n}}
\-
\cds \\
&\phantom{a} \notag\\
&\thicksim \frac{a_{1}}{1}
-
\frac{a_{1}a_{2}/(a_{3}+1+a_{2})}{1}
\phantom{}\-
\frac{a_{3}a_{4}/((a_{3}+1+a_{2})(a_{5}+1+a_{4}))}{1}
\-
\cds \notag \\
&\phantom{asdassfdfdsdas}\-
 \frac{a_{2n-1}a_{2n}/((a_{2n-1}+1+a_{2n-2})(a_{2n+1}+1+a_{2n}))}{1}
\-
\cds . \notag
\end{align}
}
By similar reasoning to that used above,
{\allowdisplaybreaks
\begin{align*}
\left |
 \frac{a_{2n-1}a_{2n}}{(a_{2n-1}+1+a_{2n-2})(a_{2n+1}+1+a_{2n})}
\right |
\leq
\frac{|a_{2n-1}|}
{\beta_{n}\beta_{n-1} |a_{2n-2}|}
\leq
\frac{1}{4}.
\end{align*}
}
Thus, again by Worpitzky's theorem,
 the odd part of $K_{n = 1}^{\infty}\displaystyle{a_{n}/1}$ equals
{\allowdisplaybreaks
\[
 \frac{a_{1}}{1}
-
\frac{a_{1}a_{2}/(a_{3}+1+a_{2})}{1+\alpha'},
\]
}
for some $\alpha'$ with $|\alpha'|\leq 1/2$. Thus the odd part of
$K_{n = 1}^{\infty}\displaystyle{a_{n}/1}$ converges to a finite value provided
$|a_{3}+1+a_{2}|>0$, and this follows easily from the
inequalities satisfied by
$|a_{2}|$ and $|a_{3}|$ in the statement of the theorem.

We next show that the conditions at \eqref{cond1} and \eqref{cond2} are also sufficient
to show that the odd and even parts tend to the same limits. From the recurrence
relations at \eqref{E:recur} and Equation \ref{detform}, it follows that
{\allowdisplaybreaks
\begin{align}
\left | \frac{A_{2n+1}}{B_{2n+1}}-\frac{A_{2n-1}}{B_{2n-1}} \right |=
\frac{\prod_{i=1}^{2n}|a_{i}|}{\left | B_{2n+1}B_{2n-1}\right | }.
\end{align}
}
Since the sequence $\{A_{2i+1}/B_{2i+1} \}$ converges to a finite value, it follows that the
expression on the right tends to 0 as $n \to \infty$. Next,
{\allowdisplaybreaks
\begin{align}\label{difeq}
\left | \frac{A_{2n}}{B_{2n}}-\frac{A_{2n-1}}{B_{2n-1}} \right |=
\frac{\prod_{i=1}^{2n}|a_{i}|}{\left | B_{2n}B_{2n-1}\right | }=
\frac{\prod_{i=1}^{2n}|a_{i}|}{\left | B_{2n+1}B_{2n-1}\right | }
\frac{\left | B_{2n+1}\right | }{ \left |B_{2n}\right | }.
\end{align}
}
Thus, if it can be shown that the sequence $\{B_{2n+1}/B_{2n}\}$ is  bounded,
then the left side of \eqref{difeq} tends to 0,
the odd and even parts of $K_{n = 1}^{\infty}\displaystyle{a_{n}/1}$ tend to
the same limits and the continued fraction converges to a finite value.

From the second equation at \eqref{E:recur} and Theorem \ref{T:t1} we have that
{\allowdisplaybreaks
\begin{align*}
&\frac{B_{2n+1}}{B_{2n}}=
1+ \frac{a_{2n+1}}{1}
\+
\frac{a_{2n}}{1}
\+
\frac{a_{2n-1}}{1}
\+
\cds
\+
 \frac{a_{2}}{1}\\
&\phantom{a}\\
&=1+\frac{a_{2n+1}}{1+a_{2n}}
\-
\frac{a_{2n}a_{2n-1}}{a_{2n-2}+1+a_{2n-1}}
\-
\frac{a_{2n-2}a_{2n-3}}{a_{2n-4}+1+a_{2n-3}}
\-\\
&\phantom{sadasfdgdgd}\cds
\-
 \frac{a_{2n-2j}a_{2n-2j-1}}{a_{2n-2j-2}+1+a_{2n-2j-1}}
\-
\cds
\-
\frac{a_{4}a_{3}}{a_{2}+1+a_{3}}
\\
&\phantom{a}\\
&=1+ \frac{a_{2n+1}/(1+a_{2n})}{1}
\-
\frac{a_{2n}a_{2n-1}/((1+a_{2n})(a_{2n-2}+1+a_{2n-1}))}{1}\\
&\phantom{}\-
\frac{a_{2n-2}a_{2n-3}/((a_{2n-2}+1+a_{2n-1})(a_{2n-4}+1+a_{2n-3}))}{1}
\-
\cds\\
&\phantom{}
\-
 \frac{a_{2n-2j}a_{2n-2j-1}/((a_{2n-2j}+1+a_{2n-2j+1})(a_{2n-2j-2}+1+a_{2n-2j-1}))}{1}\\
&\phantom{adssdfsdsdfddgsdfgsdfg}
\-
\cds
\-
\frac{a_{4}a_{3}/((a_{4}+1+a_{5})(a_{2}+1+a_{3}))}{1}
 .
\end{align*}
}
These equalities are valid since the given continued fraction expansion of $B_{2n+1}/B_{2n}$
is finite. Once again using the conditions at  \eqref{cond1} or  \eqref{cond2},
we have that
{\allowdisplaybreaks
\begin{align*}
&\left |
 \frac{a_{2n-2j}a_{2n-2j-1}}{(a_{2n-2j}+1+a_{2n-2j+1})(a_{2n-2j-2}+1+a_{2n-2j-1})}
\right |\\
& \leq
\frac{|a_{2n-2j}| |a_{2n-2j-1}|}{(|a_{2n-2j}|-(c_{n-j}+1))(|a_{2n-2j-2}|-(c_{n-j-1}+1))}\\
& \leq
\frac{|a_{2n-2j}| |a_{2n-2j-1}|}{(|a_{2n-2j}|-(1-\beta_{n-j})|a_{2n-2j}|)
(|a_{2n-2j-2}|-(1-\beta_{n-j-1})|a_{2n-2j-2}|)}\\
&=\frac{|a_{2n-2j-1}| }{\beta_{n-j}\beta_{n-j-1}|a_{2n-2j-2}|}
\leq \frac{|a_{2n-2j-1}| }{\beta _{n-j}\beta_{n-j-1}(c_{n-j-1}+1)/(1-\beta_{n-j-1}) }\leq \frac{1}{4}.
\end{align*}
}
Similarly,
{\allowdisplaybreaks
\begin{align*}
\left |
\frac{a_{2n}a_{2n-1}}{(1+a_{2n})(a_{2n-2}+1+a_{2n-1})}
\right |
&\leq
\frac{|a_{2n-1}|}
{(1-1/|a_{2n}|)\beta_{n-1}|a_{2n-2}|}\\
&\leq
\frac{\beta_{n}\beta_{n-1}(c_{n-1}+1)/(4(1-\beta_{n-1}))}
{\displaystyle{\frac{c_{n}+\beta_{n}}{c_{n}+1}}\beta_{n-1}
\displaystyle{\frac{c_{n-1}+1}{1-\beta_{n-1}}}}\\
&=\frac{\beta_{n}(c_{n}+1)}{4(c_{n}+\beta_{n})}
\leq
\frac{1}{4}
\end{align*}
Thus, by Worpitzky's theorem, there exists $\alpha$ with $|\alpha| \leq 1/2$ such that
\begin{align*}
\left |
\frac{B_{2n+1}}{B_{2n}}
\right |
=
\left |
1+ \frac{a_{2n+1}/(1+a_{2n})}{1+ \alpha}
\right |
\leq 1+2 \frac{\left |a_{2n+1}\right |}{\left |a_{2n}\right | -1}
\leq 3.
\end{align*}
}
Thus the sequence $\{B_{2n+1}/B_{2n}\}$ is  bounded by 3 and
$K_{n = 1}^{\infty}\displaystyle{a_{n}/1}$ converges to a finite value.
\end{proof}
If $\{c_{n}\}$ and $\{\beta_{n}\}$ are increasing sequences, the statement of the theorem is
simplified a little and we get the following corollary.

\begin{corollary}\label{cornew}
Let $\{c_{n}\}_{n = 1}^{\infty}$ and $\{\beta_{n}\}_{n = 1}^{\infty}$  be
increasing  sequences of  numbers,
with $c_{n}>0$ and $0 < \beta_{n}<1$ for $n \geq 1$. If the terms in the sequence
$\{a_{n}\}_{n = 1}^{\infty}$ satisfy, for $n \geq 1$, either
{\allowdisplaybreaks
\begin{align*}&|a_{2n}| \geq \frac{c_{n}+1}{1-\beta_{n}},&
&|a_{2n+1}|\leq \min \left \{ c_{n},
\,
\beta_{n} \beta_{n+1} \frac{c_{n}+1}{4(1-\beta_{n})} \right \}, &\notag
\end{align*}
}
or
{\allowdisplaybreaks
\begin{align*}
&|a_{2n}| \geq  \frac{c_{n+1}+1}{1-\beta_{n+1}},&
&|a_{2n+1}|\leq \min \left \{ c_{n}, \, \beta_{n} \beta_{n+1} \frac{c_{n+1}+1}{4(1-\beta_{n+1})} \right \},
& \notag
\end{align*}
}
then the continued fraction $K_{n = 1}^{\infty}\displaystyle{a_{n}/1}$ converges
to a finite value.
\end{corollary}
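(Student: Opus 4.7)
The plan is to deduce the corollary directly from Theorem \ref{tgen} by showing that, under the additional monotonicity assumption on $\{c_n\}$ and $\{\beta_n\}$, the hypotheses stated in the corollary already imply one of the two clauses of the theorem. No new analytic content is needed; the argument is a bookkeeping simplification of the min/max expressions appearing in \eqref{cond1} and \eqref{cond2}.

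The single observation driving the reduction is that the function $\varphi(c,\beta) = (c+1)/(1-\beta)$ is strictly increasing in each of $c>0$ and $\beta \in (0,1)$. Hence if both $\{c_n\}$ and $\{\beta_n\}$ are increasing, then
\[
\frac{c_n+1}{1-\beta_n} \;\leq\; \frac{c_{n+1}+1}{1-\beta_{n+1}} \qquad \text{for all } n\geq 1.
\]

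First I would treat the branch corresponding to \eqref{cond1}. Since $c_n \leq c_{n+1}$, we get $\min(c_n,c_{n+1}) = c_n$, and by the monotonicity above we also have $\beta_n\beta_{n+1}(c_n+1)/(4(1-\beta_n)) \leq \beta_n\beta_{n+1}(c_{n+1}+1)/(4(1-\beta_{n+1}))$. Therefore the four-term minimum bounding $|a_{2n+1}|$ in \eqref{cond1} collapses to $\min\{c_n,\,\beta_n\beta_{n+1}(c_n+1)/(4(1-\beta_n))\}$, which is exactly the first set of hypotheses in the corollary, and the lower bound on $|a_{2n}|$ is unchanged. Similarly, for the branch corresponding to \eqref{cond2}, the maximum defining the lower bound on $|a_{2n}|$ reduces to $(c_{n+1}+1)/(1-\beta_{n+1})$, and the three-term minimum bounding $|a_{2n+1}|$ collapses (using $c_n \leq c_{n+1}$) to $\min\{c_n,\,\beta_n\beta_{n+1}(c_{n+1}+1)/(4(1-\beta_{n+1}))\}$, matching the second set of hypotheses. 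In either case the sequence $\{a_n\}$ satisfies the original hypotheses of Theorem \ref{tgen}, so convergence follows.

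There is no substantive obstacle: the only step to verify is the monotonicity of $\varphi$, which is elementary. The corollary is thus essentially a restatement of the theorem specialized to the monotone regime, where the auxiliary ``neighbor'' terms involving index $n+1$ (or $n$, depending on the clause) are automatically dominated and may be dropped from the min/max.
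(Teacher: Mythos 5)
Your proposal is correct and matches the paper's intent exactly: the paper offers no separate proof of Corollary~\ref{cornew}, presenting it as an immediate simplification of Theorem~\ref{tgen} under monotonicity, and your reduction---using that $(c+1)/(1-\beta)$ is increasing in each argument so that the four-term (resp.\ three-term) minimum and the two-term maximum collapse to the expressions in the corollary---is precisely the intended bookkeeping argument.
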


In Corollary \ref{cornew} the numbers $(c_{n}+1)/(1-\beta_{n})$,
$c_{n}$ and $\beta_{n} \beta_{n+1} (c_{n}+1)/(4(1-\beta_{n}))$ increase with $n$
and so the corollary gives a convergence criterion for continued fractions
 $K_{n = 1}^{\infty}\displaystyle{a_{n}/1}$ in which both the even- and odd-indexed
partial numerators can become arbitrarily large. The necessity to
find the minimum of $c_{n}$ and $\beta_{n} \beta_{n+1}
(c_{n}+1)/(4(1-\beta_{n}))$ or of $c_{n}$ and $\beta_{n}
\beta_{n+1} (c_{n+1}+1)/(4(1-\beta_{n+1}))$ is a little
cumbersome. We also have the following corollaries which give
cleaner conditions on the $a_{n}$.

It is also of interest to be
able to prove the convergence of continued fractions where both
the odd and even-indexed partial numerators become unbounded.
Many convergence theorems require that infinitely many of the $a_{n}$
lie inside some fixed bounded disc for the continued fraction $K_{n=1}^{\infty}a_{n}/1$
to converge. Hayden's theorem \cite{H62} (see also \cite{JT80}, page 126)
requires at least one
of $a_{n}$, $a_{n+1}$ to lie inside the unit disc, for each $n \geq 1$.
For $K_{n=1}^{\infty}c_{n}^{2}/1$ to converge, Lange's theorem
\cite{L66} (see also \cite{JT80}, page 124) requires $|c_{2n-1}\pm i\,a|\leq \rho$, where
$a$ is a complex number and $\rho$ is real number satisfying
\[
|a|<\rho <|a+1|.
\]
Our theorem allows us to prove the convergence of certain continued
fractions $K_{n=1}^{\infty}a_{n}/1$,
where the sequence $\{a_{n}\}$ does not contain any bounded subsequence.
\begin{corollary}\label{cor2}
Let $\{d_{n}\}_{n = 1}^{\infty}$ be an increasing sequence of positive numbers, with
$d_{n}>25$ for $n \geq 1$. Suppose the terms of the sequence $\{a_{n}\}$ satisfy
{\allowdisplaybreaks
\begin{align*}
|a_{2n}|&\geq d_{n},\\
|a_{2n+1}| & \leq \frac{4}{25}\, d_{n}
\end{align*}
}
Then the continued fraction $K_{n = 1}^{\infty}\displaystyle{a_{n}/1}$ converges
to a finite value.
\end{corollary}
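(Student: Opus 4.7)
The plan is to derive this corollary as a consequence of the first branch of Corollary \ref{cornew} by a judicious choice of the auxiliary sequences $\{c_n\}$ and $\{\beta_n\}$. Since the hypotheses are essentially homogeneous of degree one in $d_n$ (apart from the additive constant $+1$ appearing in \eqref{cond1}), it is natural to try $\beta_n$ equal to a constant $\beta \in (0,1)$ and $c_n$ affine in $d_n$.

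Matching the lower-bound inequality $(c_n+1)/(1-\beta_n)\leq |a_{2n}|$ tightly against the hypothesis $|a_{2n}|\geq d_n$ suggests taking $(c_n+1)/(1-\beta)=d_n$, i.e., $c_n+1=(1-\beta)d_n$. Substituting this into the non-trivial entry of the $\min$ in the first branch of Corollary \ref{cornew} yields
\[
\beta_n \beta_{n+1}\frac{c_n+1}{4(1-\beta_n)} \;=\; \frac{\beta^{2}\,d_n}{4},
\]
and equating this with the bound $(4/25)d_n$ coming from the hypothesis on $|a_{2n+1}|$ forces $\beta^{2}=16/25$. I would therefore take $\beta_n=4/5$ and $c_n=d_n/5-1$ for all $n$.

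With these choices the remaining verification is arithmetic. The sequence $\{\beta_n\}$ is constant (hence weakly increasing), while $\{c_n\}$ inherits strict monotonicity from $\{d_n\}$, and $c_n>0$ since $d_n>25>5$. The first inequality of the corollary becomes $(c_n+1)/(1-\beta_n)=d_n\leq|a_{2n}|$, which holds by hypothesis. For the second, one checks that the inequality $c_n=d_n/5-1\geq 4d_n/25$ is equivalent to $d_n\geq 25$; consequently $\min\{\,c_n,\,\beta^{2}d_n/4\,\}=4d_n/25$, which is precisely the upper bound $|a_{2n+1}|\leq(4/25)d_n$ assumed in the corollary.

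The only non-routine step is pinning down the correct constant $\beta=4/5$; once this is extracted by the matching argument above, the remaining checks are immediate, and Corollary \ref{cornew} delivers the convergence of $K_{n=1}^{\infty} a_n/1$.
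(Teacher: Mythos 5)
Your proposal is correct and is essentially the paper's own proof: the paper takes exactly the same choices, $c_{n}=d_{n}/5-1$ and $\beta_{n}=4/5$, applying Theorem \ref{tgen} with the conditions at \eqref{cond1} (your route through the first branch of Corollary \ref{cornew} is just that theorem specialized to increasing sequences, and your arithmetic verification, including the key equivalence $d_{n}/5-1\geq 4d_{n}/25 \Leftrightarrow d_{n}\geq 25$, is sound).
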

\begin{proof}
For $n \geq 1$, let $c_{n}=d_{n}/5-1$ and $\beta_{n}=4/5$ in Theorem \ref{tgen} and
use the conditions at \eqref{cond1}.
\end{proof}
\begin{example}
Let the terms of the sequence $\{a_{n}\}$ satisfy
\begin{align*}
&|a_{2n-1}|=4n,& &|a_{2n}|= 25n.&
\end{align*}
Then $K_{n=1}^{\infty}a_{n}/1$ converges.
\end{example}

\begin{corollary}\label{cor1}
Let $c>0$. Suppose the terms in the sequence $\{a_{n}\}$ satisfy
{\allowdisplaybreaks
\begin{align*}
|a_{2n}| &\geq 1+3 c+2\sqrt{c}\sqrt{2c+1},\\
|a_{2n+1} | &\leq c.
\end{align*}
}
Then the continued fraction
$K_{n = 1}^{\infty}\displaystyle{a_{n}/1}$ converges to a finite value.
\end{corollary}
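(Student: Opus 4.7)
The plan is to reduce Corollary \ref{cor1} to the first alternative of Theorem \ref{tgen} (equivalently, Corollary \ref{cornew}) by choosing the auxiliary sequences $\{c_n\}$ and $\{\beta_n\}$ to be \emph{constant}. Setting $c_n\equiv c$ (the very $c$ appearing in the corollary) and $\beta_n\equiv \beta$ for some $\beta\in(0,1)$ to be determined, the two hypotheses of \eqref{cond1} collapse to
$$|a_{2n}|\geq \frac{c+1}{1-\beta},\qquad |a_{2n+1}|\leq \min\!\left\{c,\; \frac{\beta^{2}(c+1)}{4(1-\beta)}\right\}.$$
I want to pick $\beta$ so that the minimum is attained at $c$ (so the odd-index hypothesis $|a_{2n+1}|\leq c$ is already enough) while simultaneously forcing $(c+1)/(1-\beta)$ to be as small as possible; the tightest choice is the one that makes the two quantities inside the minimum equal.

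Setting $\beta^{2}(c+1)/(4(1-\beta))=c$ yields the quadratic $(c+1)\beta^{2}+4c\beta-4c=0$, whose unique positive root is
$$\beta=\frac{2\bigl(\sqrt{c(2c+1)}-c\bigr)}{c+1}.$$
A short verification shows $0<\beta<1$ for every $c>0$: $\beta>0$ because $c(2c+1)>c^{2}$, and $\beta<1$ because squaring the inequality $2\sqrt{c(2c+1)}<3c+1$ reduces to $0<(c+1)^{2}$.

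The one genuine computation, and the step I expect to be the only place that needs care, is the identity
$$\frac{c+1}{1-\beta}\;=\;1+3c+2\sqrt{c(2c+1)}.$$
I would obtain this by writing $1-\beta=(3c+1-2\sqrt{c(2c+1)})/(c+1)$ and rationalizing, using $(3c+1)^{2}-4c(2c+1)=(c+1)^{2}$ to see that the denominator cancels against the $(c+1)^{2}$ in the numerator. With this identity in hand, the hypothesis $|a_{2n}|\geq 1+3c+2\sqrt{c(2c+1)}$ becomes exactly $|a_{2n}|\geq(c+1)/(1-\beta)$, and $|a_{2n+1}|\leq c$ becomes exactly the (now-equal) minimum. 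All conditions of \eqref{cond1} are satisfied, so Theorem \ref{tgen} applies and $K_{n=1}^{\infty}a_{n}/1$ converges. The algebraic identity is really where the otherwise mysterious expression $1+3c+2\sqrt{c(2c+1)}$ in the statement comes from — it is the value of $(c+1)/(1-\beta)$ at the optimal $\beta$.
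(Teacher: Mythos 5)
Your proposal is correct and takes essentially the same approach as the paper: the paper's entire proof is the single line ``In Theorem \ref{tgen}, let $c_{n}=c$ and $\beta_{n}=2(\sqrt{c}\sqrt{2c+1}-c)/(c+1)$,'' which is precisely your constant-sequence choice via \eqref{cond1}. Your added verifications---that this $\beta$ is the positive root of $(c+1)\beta^{2}+4c\beta-4c=0$ equalizing the two terms in the minimum, that $0<\beta<1$, and that $(c+1)/(1-\beta)=1+3c+2\sqrt{c}\sqrt{2c+1}$ by rationalizing with $(3c+1)^{2}-4c(2c+1)=(c+1)^{2}$---are all correct and merely make explicit what the paper leaves to the reader.
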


\begin{proof} In Theorem \ref{tgen}, let $c_{n}=c$ and
$\beta_{n}=2(\sqrt{c}\sqrt{2c+1}-c)/(c+1)$, for $n=1,2,3, \ldots$ .
\end{proof}
For large $c$ this is clearly a weaker result  than that of Thron \cite{T59}, which states
the following (see \cite{JT80}, page 124):

 \emph{ For $\rho >1$,
$K_{n = 1}^{\infty}\displaystyle{a_{n}/1}$ converges to a finite value provided
that
{\allowdisplaybreaks
\begin{align*}
&|a_{2n-1}| \leq \rho^{2},&   &|a_{2n}| \geq 2(\rho^2-\cos \arg a_{2n}),& &n=1,2,3,\ldots .
\end{align*}
}
}

However, for small $c$ it is possible to prove the convergence of certain
continued fractions whose convergence cannot be proved by Thron's
result. We have the following example.
\begin{example}\label{ex1}
Let the terms of the sequence $\{a_{n}\}$ satisfy
{\allowdisplaybreaks
\begin{align*}
&|a_{2n}| \geq \frac{36}{23},&
&|a_{2n+1} | \leq \frac{1}{23},&
\end{align*}
}
with $a_{2k}=-36/23$ for infinitely many $k$. Then the  continued fraction
$K_{n = 1}^{\infty}\displaystyle{a_{n}/1}$ converges to a finite value.
\end{example}
This follows from Corollary \ref{cor1} with $c=1/23$. Thron's theorem does not
give the convergence of the continued fraction in this example, since there
is no real $\rho >1$ satisfying $36/23 \geq 2(\rho^2+1)$ (Hayden's theorem \cite{H62}
also gives the convergence of this continued fraction).

\allowdisplaybreaks{
}

\end{document}